\newtheorem{definition}{Definition}
\newtheorem{theorem}{Theorem}
\newtheorem{lemma}{Lemma}
\newtheorem{assumption}{Assumption}
\newcommand{\TODO}[1]{{\color{black}#1}}
\begin{document}

\title{Constant-Sum High-Order Barrier Functions\\
for Safety Between Parallel Boundaries}

\author{Kwang Hak Kim, Mamadou Diagne, and Miroslav Krstić
\thanks{This work was supported by the Department of the Navy, Office of Naval Research under grant N00014-23-1-2376. The results and opinions in this paper are solely of the authors and do not reflect the position or the policy of the U.S. Government.\endgraf
K. Kim, M. Diagne, and M. Krstić are with the Department of Mechanical and Aerospace Engineering, UC San Diego, 9500 Gilman Drive, La Jolla, CA, 92093-0411, {\tt\small \{kwk001,mdiagne,krstic\}@ucsd.edu}.}
}

\maketitle
\begin{abstract}
This paper takes a step towards addressing the difficulty of constructing Control Barrier Functions (CBFs) for parallel safety boundaries. A single CBF for both boundaries has been reported to be difficult to validate for safety, and we identify why this challenge is inherent. To overcome this, the proposed method constructs separate CBFs for each boundary. We begin by presenting results for the relative degree one case and then extend these to higher relative degrees using the CBF backstepping technique, establishing conditions that guarantee safety. Finally, we showcase our method by applying it to a unicycle system, deriving a simple, verifiable condition to validate the target CBFs for direct implementation of our results.
\end{abstract}



\section{Introduction}
Safety is a growing concern as autonomous technologies expand across various sectors, increasing interest in safety-critical control. Control Barrier Functions (CBFs) \cite{ames2019cbf_theory,wieland_constructive_2007} have gained popularity for their ability to enforce safety, with minimal deviation from nominal objectives when used with the quadratic program (QP) formulation \cite{ames2014cbfqp_cruise,ames2017cbfqp_critical} leading to many findings \cite{abel2023prescribed_time,velimir_safe,koga_safe_2023}. However, implementing CBF-QP safety filters can be challenging, particularly for high and mixed relative degree CBFs, where absent control inputs in the first derivative of the safety constraint cannot give safety guarantees. To address this, several methods have been proposed, with High Order CBFs (HOCBFs) \cite{xiao_hocbf} being among the most widely used. \TODO{Nevertheless, constructing such CBFs remains a nontrivial task.

We focus on the case where the objective is to keep the system within parallel boundaries. In this scenario, the construction of a valid CBF can be particularly challenging at the midway point between the boundaries, where the gradient of the safety constraint often vanishes. This leads to a loss of control authority, making safety enforcement difficult in any direction. Such issues were observed in Examples 2 and 5 of \cite{cohen_backstep_rom_2024}, where traditional single CBF approaches with high relative degree techniques failed to ensure safety. While methods such as \cite{taylor_safe_2022} and \cite{ong_rectified_2024} have been proposed, we take a different approach, drawing insights from \cite{drew_brugg_sim_LK} and \cite{koga_safe_2023}, and generalize them into a more flexible and systematic framework.

In this paper, we show that utilizing two CBFs for parallel boundaries can provide advantages over using a single CBF. Although numerous studies have explored the use of multiple CBFs, such as \cite{molnar2023composing} and \cite{alyaseen2024safetycriticalcontroldiscontinuoussystems}, this work uses the control-sharing property as presented in \cite{XU2018195}. However, the work in \cite{XU2018195} assumes that the Lie derivative in the control direction is never zero. Our approach relaxes this assumption by explicitly verifying feasibility when the Lie derivative is zero, thereby accommodating potential sign changes of the Lie derivative. Furthermore, we extend the single-input result in \cite{XU2018195} to multi-input systems when the safety boundary is parallel. 

This formulation is extended to high relative degree CBFs by utilizing the CBF backstepping technique, having roots in \cite{krstic_nonovershooting_2006} that predates the formulation of CBFs. Modern translations to CBFs can be found in \cite{abel2023prescribed_time,koga_safe_2023,kim2024robustcontrolbarrierfunction}. While the target CBF transformations in this technique are similar to HOCBFs, the key difference is in the gain design. By choosing gains based on the system's initial condition, the induced safe set guarantees the initial states are included, eliminating the need to retune parameters as in HOCBFs. Due to this useful property, the backstepping method has been applied to works such as simultaneous lane-keeping and obstacle avoidance \cite{drew_brugg_sim_LK}, prescribed time safety filters (PTSf) \cite{abel2023prescribed_time}, Stefan Model PDEs \cite{koga_safe_2023,koga2023event}, and unknown obstacle avoidance \cite{kim2024robustcontrolbarrierfunction}.

We note that the above CBF backstepping method differs from the backstepping approach defined in \cite{taylor_safe_2022}, which also addresses high relative degree problems but is inspired by traditional Lyapunov backstepping, where virtual controls transform a system into a target system. While both methods ask the same fundamental question, they differ significantly in the choice of the target system. For clarity, when we say CBF backstepping, we refer to the backstepping as in \cite{krstic_nonovershooting_2006}.

This work proposes a framework using two CBFs to enforce parallel safety boundaries, applicable to both relative degree one and arbitrarily high relative degree CBFs with the associated QP admitting a closed-form solution. A unicycle example illustrates the method’s effectiveness, including a sufficient condition for the target CBF's validity.
}

\section{Preliminaries} \label{prelim}

We start with a preliminary overview of CBFs \cite{ames2019cbf_theory} outlining their definitions and key properties. Consider the following nonlinear control affine system:
\begin{equation} \label{affine}
    \dot{x} = f(x) + g(x)u,
\end{equation}
with $x \in D \subseteq\mathbb{R}^{n}$ and \TODO{unbounded input} $u \in \mathbb{R}^{m}$.

\begin{definition}
    A scalar-valued continuously differentiable function $h \ : \ D \rightarrow \mathbb{R}$ with the property that $\inf_{x \in D} h(x) < 0$ and $\sup_{x \in D} h(x) > 0$ is defined as a \textit{Barrier Function (BF) candidate}. The set $\mathcal{C} = \{ x \in D \mid h(x) \geq 0\}$ is defined as the \textit{safe set}, the set $\text{Int}(\mathcal{C}) \; = \; \{x \in D \mid h(x) > 0 \}$ is the \textit{interior set}, and the set $\partial \mathcal{C} = \{ x \in D \mid h(x) = 0\}$ is the \textit{boundary set}.
\end{definition}

\begin{definition}
    A continuously differentiable function $h \; : \; D \rightarrow \mathbb{R}$, is a \textit{Control Barrier Function (CBF)} if there exists an extended class $\mathcal{K}_{\infty}$ function $\alpha \; : \; \mathbb{R} \rightarrow \mathbb{R}$ such that for the control system \eqref{affine}:
    \begin{equation}
        \sup_{u} [L_{f}h(x) + L_{g}h(x)u] \geq -\alpha (h(x)).\label{cbf_cond}
    \end{equation}
\end{definition}

Note that since \TODO{the control input is assumed unbounded (i.e., $u \in \mathbb{R}^m$)}, condition \eqref{cbf_cond} is equivalent to:
\begin{align}
    L_gh(x) = 0 \implies L_fh(x) \geq -\alpha(h(x)).\label{cbf_Lgh_cond}
\end{align}
\begin{lemma}\label{lemma_forward_invariance}
    \textit{(Lemma 4.4 in \cite{khalil_nonlinear_2002} and Lemma 2 in \cite{glotfelter2017nonsmooth})} For a continuously differentiable function $h:\; [t_0,\infty) \rightarrow \mathbb{R}$, if $h(x(t_0)) \geq 0$ and $\dot{h}(x(t)) \geq -\alpha(h(x(t)))$ for all $t \in [t_0,\infty)$, then $h(x(t)) \geq 0$ for all $t \in [t_0,\infty)$.
\end{lemma}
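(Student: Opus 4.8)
The plan is to give a direct proof by contradiction, rather than invoking the comparison lemma as a black box: the comparison lemma in its usual form wants $\alpha$ locally Lipschitz so that $\dot y = -\alpha(y)$ has a unique solution, whereas here $\alpha$ is only an extended class $\mathcal{K}_{\infty}$ function, so $y\equiv 0$ need not be the unique solution through the origin. The structural fact I would lean on is that an extended class $\mathcal{K}_{\infty}$ function is strictly increasing with $\alpha(0)=0$, hence $\alpha(r)<0$ whenever $r<0$; this is exactly what turns the differential inequality into a sign condition on $\dot h$ on the set where $h$ is negative.

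First I would suppose the conclusion fails, i.e. there is some $t_2>t_0$ with $h(x(t_2))<0$. Using continuity of $t\mapsto h(x(t))$ together with $h(x(t_0))\geq 0$, I would set $t_1 := \sup\{\,t\in[t_0,t_2] : h(x(t))\geq 0\,\}$ and record that $t_1\in[t_0,t_2)$, that $h(x(t_1))=0$ by continuity, and that $h(x(t))<0$ for every $t\in(t_1,t_2]$. On that interval, combining $h(x(t))<0$ with the properties of $\alpha$ gives $-\alpha(h(x(t)))>0$, and therefore $\dot h(x(t))\geq -\alpha(h(x(t)))>0$ on $(t_1,t_2]$. Integrating from $t_1$ to $t_2$ and using $h(x(t_1))=0$ yields $h(x(t_2))=\int_{t_1}^{t_2}\dot h(x(s))\,ds>0$, which contradicts $h(x(t_2))<0$; hence $h(x(t))\geq 0$ for all $t\in[t_0,\infty)$.

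The part I would be most careful about is precisely the uniqueness/nonsmoothness issue flagged above, which is why I prefer this self-contained barrier argument: it never differentiates or inverts $\alpha$ and only uses its monotonicity and $\alpha(0)=0$. The nonsmooth variant attributed to Glotfelter et al. follows the same outline, with $\dot h$ replaced by the lower Dini derivative of $t\mapsto h(x(t))$ and the fundamental theorem of calculus replaced by the standard Dini-derivative integration estimate; the sign bookkeeping on $(t_1,t_2]$ is unchanged.
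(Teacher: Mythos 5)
Your proof is correct, but it takes a different route from the paper: the paper does not prove this lemma at all, it simply cites it (Khalil's comparison-lemma argument and Glotfelter et al.'s nonsmooth version), whereas you supply a self-contained first-crossing argument. The contradiction setup is sound: $t_1=\sup\{t\in[t_0,t_2]:h(x(t))\geq 0\}$ is well defined because $h(x(t_0))\geq 0$, continuity forces $h(x(t_1))=0$ and $t_1<t_2$, strict monotonicity of $\alpha$ together with $\alpha(0)=0$ gives $\dot h(x(t))\geq-\alpha(h(x(t)))>0$ on $(t_1,t_2]$, and the fundamental theorem of calculus (legitimate here since $t\mapsto h(x(t))$ is $C^1$) yields $h(x(t_2))>0$, a contradiction. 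Your motivation for avoiding the comparison lemma is also well taken: Khalil's Lemma 4.4 assumes the class $\mathcal{K}$ function is locally Lipschitz so that $\dot y=-\alpha(y)$ has a unique solution, an assumption an extended class $\mathcal{K}_\infty$ function need not satisfy; your argument uses only monotonicity and $\alpha(0)=0$, so it is strictly more elementary and covers the general case the paper implicitly relies on. What the citation-based route buys the paper is brevity and a pointer to the nonsmooth extension needed for the QP-based controller (which is only piecewise defined); your closing remark that the same outline survives with Dini derivatives correctly accounts for that case as well.
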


We now introduce our motivational problem.

\section{Motivational Problem}\label{motivation}
The following problem was highlighted in \cite{cohen_backstep_rom_2024} and \cite{ong_rectified_2024}. Consider the double integrator system: $\dot{x}_1 = x_2, \; \dot{x}_2 = u$, where ${x = [x_1, x_2]^\top \in \mathbb{R}^2}$. To enforce ${x_1 \in [-1,1]}$, we define the safe set as ${ \mathcal{C} = \{x \in \mathbb{R}^2 \mid x_1 \in [-1,1]\}}$. A common CBF choice is $h_1(x) = 1 - x_1^2$, but since $L_g h_1(x) = 0$ for all $x$, it can be seen to have a relative degree of 2. 

To address this, we use the CBF backstepping method to construct a target CBF ${h_2(x) = c_1(1 - x_1^2) - 2x_1x_2}$, with the gain choice \TODO{$c_1 > 0$. 
The resulting CBF condition is $-2c_1x_1x_2 -2x_2^2 -2x_1u \geq -c_1c_2(1 - x_1^2) -2c_2x_1x_2,$
where $c_2 > 0$ is the liveness gain for $h_2(x)$}.
We observe that at $x_1 = 0$, the safety filter loses control authority (i.e., $L_g h_2(x) = L_g L_f h_1(x) = -2x_1 = 0$). When $x_1 = 0$, the CBF condition requires that $|x_2| \leq \sqrt{c_1 c_2 / 2}$, which does not hold for all $x_2 \in \mathcal{C}$ and therefore cannot guarantee safety. Similar conditions arise when using HOCBFs as reported in \cite{cohen_backstep_rom_2024} and \cite{ong_rectified_2024}. 

This problem arises because the gradient $\nabla h_1(x) = [-2x_1, 0]^\top$ vanishes at $x_1 = 0$, where $h_1(x)$ has a local maximum. Instead, we propose an alternative approach that decomposes the single CBF constraint into two, removing the loss of control authority due to the gradient vanishing while maintaining safety. Our method generalizes ideas from \cite{drew_brugg_sim_LK} and \cite{koga_safe_2023} in a more flexible framework.

\section{Control Barrier Functions with Parallel Safety Boundaries}\label{parallel}
Consider two CBFs, $h(x)$ and $\hbar(x)$, for system \eqref{affine}, both having uniform relative degree $r = n \geq 1$, with corresponding safe sets $\mathcal{C}_h$ and $\mathcal{C}_\hbar$. The ideal safe set is then given by the intersection of these sets, i.e., $\mathcal{C} = \mathcal{C}_h \cap \mathcal{C}_{\hbar}$. We introduce a class of CBFs in the form of the following definition, which makes this process streamlined. 

\begin{definition}
    A pair of candidate CBFs $h(x)$ and $\hbar(x)$ is \textit{parallel} if there exists a constant $b > 0$ such that:
    \begin{align}
        h(x) + \hbar(x) = b\label{parallel_cond}.
    \end{align}
\end{definition}

We note that the constant-sum expression in \eqref{parallel_cond} is a special case of the convex combination $a_1 h(x) + a_2 \hbar(x) = b$ with $a_1, a_2 > 0$. However, since the scaling factors $a_1, a_2$ can be rescaled into the CBFs without affecting the safe set, we opt to study the constant-sum formulation to clearly illustrate its utility.

The parallel structure is particularly useful as it allows for a more systematic expression of the combined safety condition. A key property of the parallel CBFs is that $\nabla h(x) = -\nabla \hbar(x)$, which represents two parallel safety boundaries with the combined safe set defined between them. We now demonstrate how this property can be used to derive a safety filter. 

\subsection{Relative Degree One}
We first present the result for the case of relative degree one. Suppose $h(x)$ and $\hbar(x)$ are parallel and both have a relative degree of $r = 1$. Taking the time derivative of $h(x)$ and $\hbar(x)$ yields the CBF conditions:
\begin{align}
    L_fh(x) + L_gh(x)u \geq -\alpha(h(x)),\label{rel1_hdot}\\
    L_f\hbar(x) + L_g\hbar(x)u \geq -\overline{\alpha}(\hbar(x))\label{rel1_hbardot},
\end{align}
where $\alpha$ and $\overline{\alpha}$ are extended class $\mathcal{K}_\infty$ functions. Since $h(x)$ and $\hbar(x)$ are parallel, without loss of generality, \eqref{rel1_hbardot} can be rewritten as $-L_fh(x) - L_gh(x)u \geq -\overline{\alpha}(b-h(x))$ and combining with \eqref{rel1_hdot}, we arrive at the following joint constraint:
\begin{align}\label{constraint}
 &   {\underline{\eta}(x)} 
    \leq L_gh(x)u \leq
   {\overline{\eta}(x)},
    \\
    \label{constraint1}
   \underline{\eta}(x) &:= -L_fh(x) -\alpha(h(x)) ,
   \\
   \overline{\eta}(x) &:= -L_fh(x) + \overline{\alpha}(b-h(x)).
    \label{constraint2}
\end{align}

Then, to derive the control law, we introduce the quadratic program (QP)\cite{ames2014cbfqp_cruise} formulation as follows:
\begin{align}
    &\qquad \quad u = \min_{u} \frac{1}{2}\| u - u_0 \|^2, \label{qp_1}\\
    &\text{Subject to:} \quad  \underline{\eta}(x) \leq  L_gh(x) u \leq \overline{\eta}(x)\label{qp_2}.
\end{align}

First, we establish that the QP problem \eqref{qp_1}--\eqref{qp_2} is always feasible when $h(x)$ and $\hbar(x)$ are valid CBFs and are parallel. A necessary and sufficient condition for ensuring feasibility in such cases is to show that $h(x)$ and $\hbar(x)$ have the \textit{control-sharing property} \cite[Thm.~1]{XU2018195}. That is, feasibility is guaranteed if the inequality $\underline{\eta}(x) \leq \overline{\eta}(x)$ holds for all $x \in D$. However, \cite{XU2018195} assumes that $L_g h(x)$ and $L_g \hbar(x)$ are never zero. This was a restrictive assumption that also limits the sign of the $L_g h(x),L_g \hbar(x)$ terms to be constant. To address cases where this assumption does not hold, we further show that $L_g h(x) = -L_g \hbar(x) = 0 \implies \underline{\eta}(x) \leq 0 \leq \overline{\eta}(x)$. The following lemma formalizes this result.

\begin{lemma}\label{lemma_feasible}
    Let $h(x)$ and $\hbar(x)$ be valid CBFs and parallel. Then, ${L_g h(x) = 0 \implies \underline{\eta}(x) \leq 0 \leq \overline{\eta}(x)}$. Moreover, there exist extended class $\mathcal{K}_{\infty}$ functions $\alpha$ and $\overline{\alpha}$ such that $\underline{\eta}(x) \leq \overline{\eta}(x)$ for all $x \in D$. Thus, $h(x)$ and $\hbar(x)$ have the control-sharing property and the QP problem \eqref{qp_1}--\eqref{qp_2} is always feasible.
\end{lemma}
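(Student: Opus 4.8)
The plan is to split the claim into its two natural pieces and handle the zero-Lie-derivative implication first, since it is both the crux of the argument and exactly the gap left open by \cite{XU2018195}. First I would observe that $h$ and $\hbar$ are \emph{parallel}, so $h(x)+\hbar(x)=b$ forces $\nabla h(x) = -\nabla\hbar(x)$, and therefore $L_g h(x) = -L_g\hbar(x)$ and $L_f h(x) = -L_f\hbar(x)$ pointwise. Consequently $L_g h(x) = 0$ if and only if $L_g\hbar(x)=0$. At such a point, apply the CBF characterization \eqref{cbf_Lgh_cond} to $h$: since $L_g h(x)=0$, validity of $h$ gives $L_f h(x) \geq -\alpha(h(x))$, i.e. $-L_f h(x) - \alpha(h(x)) \leq 0$, which is precisely $\underline{\eta}(x) \leq 0$ by \eqref{constraint1}. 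Applying \eqref{cbf_Lgh_cond} to $\hbar$ at the same point, $L_f\hbar(x) \geq -\overline{\alpha}(\hbar(x))$; rewriting via $L_f\hbar = -L_f h$ and $\hbar(x) = b - h(x)$ yields $-L_f h(x) \geq -\overline{\alpha}(b-h(x))$, i.e. $\overline{\eta}(x) \geq 0$ by \eqref{constraint2}. Together these give $\underline{\eta}(x) \leq 0 \leq \overline{\eta}(x)$, proving the first implication.

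Next I would establish the global inequality $\underline{\eta}(x) \leq \overline{\eta}(x)$ for a suitable choice of $\alpha,\overline{\alpha}$. Subtracting \eqref{constraint1} from \eqref{constraint2}, the $-L_f h(x)$ terms cancel and the inequality reduces to $\alpha(h(x)) + \overline{\alpha}(b-h(x)) \geq 0$ for all $x\in D$. Writing $s = h(x)$, which ranges over some subset of $\mathbb{R}$, it suffices to guarantee $\alpha(s) + \overline{\alpha}(b-s) \geq 0$ for all relevant $s$. On the safe set intersection, where $0 \leq s \leq b$, both arguments are nonnegative and, since extended class $\mathcal{K}_\infty$ functions are nonnegative on $[0,\infty)$ and $0$ at $0$, the sum is automatically nonnegative; the delicate region is where $s < 0$ (so $\alpha(s) < 0$) or $s > b$ (so $\overline{\alpha}(b-s) < 0$). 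Here I would invoke the degree of freedom in choosing the class $\mathcal{K}_\infty$ functions: given the already-fixed pair from the definitions of CBF validity for $h$ and $\hbar$, one can always rescale — e.g. replace $\alpha$ by $\lambda\alpha$ and $\overline{\alpha}$ by $\lambda\overline{\alpha}$, or more carefully steepen one of them on the negative axis — so that the positive contribution dominates the negative one on the whole domain $D$ (which, being a subset of $\mathbb{R}^n$ on which $h$ is continuous, makes $h(D)$ a bounded-below-away-from-$-\infty$ concern only if $D$ is bounded; otherwise one uses that $\underline{\eta}(x)\le 0$ already whenever $s\le 0$ by a symmetric CBF argument). I would spell out the cleanest sufficient recipe: pick $\alpha,\overline{\alpha}$ so that $\min_{s}\big(\alpha(s)+\overline{\alpha}(b-s)\big)\ge 0$, which is feasible because each term is monotone and the constraint is slack on $[0,b]$.

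Finally, with $\underline{\eta}(x) \leq \overline{\eta}(x)$ established for all $x\in D$, I would close the argument by noting that the feasible set of the QP \eqref{qp_1}--\eqref{qp_2} is the preimage under the linear map $u \mapsto L_g h(x)u$ of the interval $[\underline{\eta}(x),\overline{\eta}(x)]$. When $L_g h(x)\neq 0$ this preimage is a nonempty slab (an affine half-space intersection), hence nonempty; when $L_g h(x) = 0$ the constraint reads $0 \in [\underline{\eta}(x),\overline{\eta}(x)]$, which holds by the first part of the lemma, so \emph{every} $u$ is feasible. Thus the QP is feasible for all $x\in D$, and by \cite[Thm.~1]{XU2018195} this is equivalent to $h$ and $\hbar$ possessing the control-sharing property, completing the proof. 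The main obstacle I anticipate is the middle step: making precise the sense in which one is "free" to choose $\alpha,\overline{\alpha}$ after the CBF validity of $h$ and $\hbar$ has already pinned down \emph{some} admissible class $\mathcal{K}_\infty$ functions — the clean resolution is that CBF validity is preserved under replacing $\alpha$ by any larger extended class $\mathcal{K}_\infty$ function, so one may always enlarge both $\alpha$ and $\overline{\alpha}$ until the sum condition holds, and on the region $s\notin[0,b]$ the one-sided CBF arguments above already force the relevant one of $\underline{\eta}\le 0$ or $\overline{\eta}\ge 0$ outright.
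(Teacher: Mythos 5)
Your proposal follows the paper's proof essentially step for step: the first implication is obtained exactly as in the paper by applying the characterization \eqref{cbf_Lgh_cond} to both $h$ and $\hbar$ and using parallelism to flip the signs of $L_g$ and $L_f$, and the second part reduces, as in the paper, to showing $\overline{\eta}(x)-\underline{\eta}(x)=\alpha(h(x))+\overline{\alpha}(b-h(x))\geq 0$. The only difference is that where you circle around the existence of suitable $\alpha,\overline{\alpha}$ with a rescaling/enlargement discussion, the paper simply exhibits the witness $\alpha(s)=\overline{\alpha}(s)=cs$ with $c>0$, for which the sum is identically $cb>0$ --- worth stating explicitly, since it closes that step in one line.
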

\begin{proof}
    Since \TODO{$h(x)$ and $\hbar(x)$} 
    are CBFs, from \eqref{cbf_Lgh_cond}, we have the following properties:
    \begin{align}
            L_gh(x) = 0 &\implies L_fh(x) \geq -\alpha(h(x)),\label{CBF_h_cond}\\ 
            L_g\hbar(x) = 0 &\implies L_f\hbar(x) \geq -\overline{\alpha}(\hbar(x)).\label{CBF_hbar_cond}
    \end{align}
    Then, using the fact that $h(x)$ and $\hbar(x)$ are parallel, we rewrite \eqref{CBF_hbar_cond} as:
    \begin{align}
        -L_gh(x) = 0 \implies -L_fh(x) \geq -\overline{\alpha}(b - h(x))\label{CBF_hbar_cond2}.
    \end{align}
    Since ${-L_gh(x) = 0 \iff L_gh(x) = 0}$, we see from \eqref{CBF_h_cond} and \eqref{CBF_hbar_cond2} that $L_gh(x) = 0 \implies \underline{\eta}(x) = -L_fh(x) - \alpha(h(x)) \leq 0$ and $\overline{\eta}(x) = -L_fh(x) + \overline{\alpha}(b - h(x)) \geq 0$.
    Thus, $L_gh(x) = 0 \implies \underline{\eta}(x) \leq 0 \leq \overline{\eta}(x)$.
    
    To prove the second part, we observe that $ \overline{\eta}(x) - \underline{\eta}(x) = \overline{\alpha}(b-h(x)) + \alpha(h(x))$. The trivial choice of $\overline{\alpha}(s) = \alpha(s) = cs$ where $c > 0$ results in $\overline{\eta}(x) - \underline{\eta}(x) = cb > 0$. Thus, there exist extended class $\mathcal{K}_{\infty}$ functions $\alpha$ and $\overline{\alpha}$ such that $\underline{\eta}(x) \leq \overline{\eta}(x)$ for all $x \in D$.
\end{proof}

With the feasibility established, we prove that the QP problem \eqref{qp_1}--\eqref{qp_2} has a closed-form solution.

\begin{theorem}\label{thrm_kkt}
    The QP problem \eqref{qp_1}--\eqref{qp_2} has a closed-form solution of the form:
    \TODO{
    \begin{align}\label{control_law_rel1}
        u^* = \begin{cases}
            u_0, & \text{if } L_g h(x) = 0, \\
            u_0, & \text{if } \underline{\eta}(x) \leq L_g h(x) u_0 \leq \overline{\eta}(x), \\
            u_0 + \overline{\psi}(x,u_0), & \text{if } L_g h(x) u_0 > \overline{\eta}(x), \\
            u_0 + \underline{\psi}(x,u_0), & \text{if } L_g h(x) u_0 < \underline{\eta}(x),
            \end{cases}
    \end{align}
    where 
    \begin{align}
        \overline{\psi}(x,u_0) \coloneqq \Bigl( L_g h(x)\Bigr)^\top\dfrac{\overline{\eta}(x) - L_g h(x) u_0 }{| L_g h(x) |^2},\\
        \underline{\psi}(x,u_0) \coloneqq \Bigl( L_g h(x)\Bigr)^\top\dfrac{\underline{\eta}(x) - L_g h(x) u_0 }{| L_g h(x) |^2}.
    \end{align}
    }
\end{theorem}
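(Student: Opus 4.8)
The plan is to read the QP \eqref{qp_1}--\eqref{qp_2} as the Euclidean projection of $u_0$ onto the slab $S(x) := \{u\in\mathbb{R}^m : \underline{\eta}(x) \le L_g h(x)u \le \overline{\eta}(x)\}$, which is nonempty by Lemma \ref{lemma_feasible} (taking $\alpha,\overline{\alpha}$ as in that proof). Since the objective $\tfrac12\|u-u_0\|^2$ is strictly convex and the two constraints in \eqref{qp_2} are affine, the minimizer exists and is unique, and the KKT conditions are both necessary and sufficient (affine constraints need no further constraint qualification). Hence it suffices to exhibit, for each of the four branches of \eqref{control_law_rel1}, multipliers $\lambda_1,\lambda_2\ge 0$ certifying that the claimed $u^\ast$ satisfies stationarity, primal feasibility, and complementary slackness; uniqueness of the KKT point then pins down $u^\ast$.

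Concretely, I would write the constraints as $g_1(u):=L_g h(x)u-\overline{\eta}(x)\le 0$ and $g_2(u):=\underline{\eta}(x)-L_g h(x)u\le 0$, form $\mathcal{L}=\tfrac12\|u-u_0\|^2+\lambda_1 g_1(u)+\lambda_2 g_2(u)$, and solve stationarity, $u=u_0-(\lambda_1-\lambda_2)\bigl(L_g h(x)\bigr)^\top$. I first split on whether $L_g h(x)=0$: if so, stationarity forces $u^\ast=u_0$ regardless of the multipliers, and primal feasibility is exactly $\underline{\eta}(x)\le 0\le\overline{\eta}(x)$ from Lemma \ref{lemma_feasible}; the same inequalities show that when $L_g h(x)=0$ one can never have $L_g h(x)u_0>\overline{\eta}(x)$ or $L_g h(x)u_0<\underline{\eta}(x)$, so the last two branches of \eqref{control_law_rel1} are never triggered and the division by $|L_g h(x)|^2$ there is harmless. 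If $L_g h(x)\ne 0$, I consider three subcases: (i) $\lambda_1=\lambda_2=0$, giving $u^\ast=u_0$, admissible precisely when $u_0\in S(x)$; (ii) $\lambda_1>0=\lambda_2$, where complementary slackness activates $g_1$, and substituting the stationarity expression yields $\lambda_1=(L_g h(x)u_0-\overline{\eta}(x))/|L_g h(x)|^2$, positive exactly when $L_g h(x)u_0>\overline{\eta}(x)$, and producing $u^\ast=u_0+\overline{\psi}(x,u_0)$; (iii) $\lambda_2>0=\lambda_1$, the mirror image, yielding $u^\ast=u_0+\underline{\psi}(x,u_0)$ when $L_g h(x)u_0<\underline{\eta}(x)$.

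To close the argument I would verify primal feasibility of the candidates in (ii) and (iii): in (ii) the candidate lies on the hyperplane $L_g h(x)u=\overline{\eta}(x)$, so the only remaining constraint $g_2(u^\ast)\le 0$ reduces to $\underline{\eta}(x)\le\overline{\eta}(x)$, guaranteed by Lemma \ref{lemma_feasible}; (iii) is symmetric. The subcases (i)--(iii) are mutually exclusive and, together with the $L_g h(x)=0$ case, exhaust every sign pattern of $L_g h(x)u_0$ relative to $[\underline{\eta}(x),\overline{\eta}(x)]$, so they coincide exactly with the four branches of \eqref{control_law_rel1}.

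The main obstacle I anticipate is not conceptual but the bookkeeping around degenerate configurations: ensuring the case split is genuinely exhaustive and non-overlapping, in particular handling $\underline{\eta}(x)=\overline{\eta}(x)$ (both constraints active at once, yet the stationarity direction still collapses to one well-defined $u^\ast$), and making sure every claim that a multiplier is ``admissible'' is backed by primal feasibility — for which Lemma \ref{lemma_feasible} is precisely the input needed. Everything else is the standard projection-onto-a-slab computation.
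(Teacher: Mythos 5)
Your proposal is correct and follows essentially the same route as the paper: invoke feasibility from Lemma \ref{lemma_feasible}, handle the $L_g h(x)=0$ case separately via that lemma, and for $L_g h(x)\neq 0$ apply the KKT conditions with a case split on which constraint is active, solving for the multipliers explicitly. Your additional checks (uniqueness from strict convexity, primal feasibility of the projected candidates via $\underline{\eta}(x)\le\overline{\eta}(x)$, and exhaustiveness of the branches) are refinements of the same argument rather than a different approach.
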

\begin{proof}
    In the case where $L_g h(x) = 0$, we have shown in Lemma \ref{lemma_feasible} that $L_g h(x) = 0 \implies \underline{\eta}(x) \leq 0 \leq \overline{\eta}(x)$, regardless of the control input $u$. Hence, the optimal solution to \eqref{qp_1} is $u^* = u_0$.  

    For the case $L_gh(x) \neq 0$, we must verify systematically. Noting that both the objective function and constraints are convex, the Karush-Kuhn-Tucker (KKT) conditions \cite{boyd2004convex} provide necessary and sufficient conditions for $u^* \in \mathbb{R}^m$ to be the optimal solution. Namely, we must show that there exist $\lambda_1, \lambda_2 \geq 0$ such that  
    \TODO{
    \begin{align}
       & u^* - u_0 + (\lambda_1 - \lambda_2)L_g h(x)^\top = 0, \label{stationary} \\
        &\underline{\eta}(x) \leq L_g h(x) u^* \leq \overline{\eta}(x), \label{primal} \\
        &\lambda_1, \lambda_2 \geq 0, \label{dual} \\
        &\lambda_1 (L_g h(x) u^* - \overline{\eta}(x)) = 0, \label{slackness1} \\
        &\lambda_2 (\underline{\eta}(x) - L_g h(x) u^*) = 0. \label{slackness2}
    \end{align}  
    }
    Here, \eqref{stationary} is referred to as the \textit{stationarity} condition, \eqref{primal} as the \textit{primal feasibility} condition, \eqref{dual} as the \textit{dual feasibility} condition, and \eqref{slackness1}--\eqref{slackness2} as the \textit{complementary slackness} condition. 
    
    \textbf{Case 1:} Suppose $\underline{\eta}(x) < L_gh(x)u^* < \overline{\eta}(x)$. Then, $\lambda_1 = \lambda_2 = 0$, as seen from \eqref{slackness1}--\eqref{slackness2} since neither constraint is active. From \eqref{stationary}, we get $u^* = u_0$, which matches the expression in \eqref{control_law_rel1}.
    
    \textbf{Case 2:} Suppose $L_gh(x)u^* = \overline{\eta}(x)$. Then, $\lambda_2 = 0$ and from \eqref{stationary} we get the expression $u^* = u_0- \lambda_1L_gh(x)^\top$ to get that $L_gh(x)u_0 - \lambda_1|L_gh(x)|^2 = \overline{\eta}(x)$ from \eqref{slackness1}. Then, solving for $\lambda_1$:
    \begin{align}
        \lambda_1 = -\frac{\overline{\eta}(x)-L_gh(x)u_0}{|L_gh(x)|^2}.
    \end{align}
    From \eqref{dual}, since $\lambda_1 \geq 0$, it follows that if $L_g h(x) u^* = \overline{\eta}(x)$, then we must have $L_g h(x) u_0 > \overline{\eta}(x)$. Substituting the expression for $\lambda_1$ into the expression for $u^*$ again, we obtain  
    \begin{align}
        u^* = u_0 + \Bigl(L_g h(x)\Bigr)^\top \frac{\overline{\eta}(x) - L_g h(x) u_0}{|L_g h(x)|^2},
    \end{align}  
    which matches with \eqref{control_law_rel1} when $L_g h(x) u_0 > \overline{\eta}(x)$.
    
    \textbf{Case 3:} Similar to case 2.

    Thus, the control law \eqref{control_law_rel1} is a closed-form solution to the QP problem \eqref{qp_1}--\eqref{qp_2}.
\end{proof}

We summarize this result in Theorem \ref{thrm_rel1_Safe}.
\begin{theorem}\label{thrm_rel1_Safe}
    For system \eqref{affine}, if $h(x)$ and $\hbar(x)$ are CBFs and parallel, then there exist extended class $\mathcal{K}_\infty$ functions $\alpha$ and $\overline{\alpha}$ such that the control law \eqref{control_law_rel1} keeps the set $\mathcal{C}_h \cap \mathcal{C}_\hbar$ forward invariant.
\end{theorem}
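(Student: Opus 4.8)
The plan is to chain together the three ingredients already in place: feasibility (Lemma~\ref{lemma_feasible}), the explicit minimizer (Theorem~\ref{thrm_kkt}), and the comparison principle (Lemma~\ref{lemma_forward_invariance}). First I would use the second part of Lemma~\ref{lemma_feasible} to \emph{fix} the extended class $\mathcal{K}_\infty$ functions, e.g.\ $\alpha(s)=\overline{\alpha}(s)=cs$ with $c>0$, so that $\underline{\eta}(x)\le\overline{\eta}(x)$ on all of $D$. With this choice the constraint interval in \eqref{constraint} is nonempty everywhere, and it contains $0$ whenever $L_gh(x)=0$; hence the QP \eqref{qp_1}--\eqref{qp_2} is feasible for every $x\in D$ and, by Theorem~\ref{thrm_kkt}, the closed-form control $u^{*}$ of \eqref{control_law_rel1} is well defined.

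Second, I would check that $u^{*}$ renders both CBF inequalities valid pointwise. Every branch of \eqref{control_law_rel1} satisfies $\underline{\eta}(x)\le L_gh(x)u^{*}\le\overline{\eta}(x)$: on the first branch this is the inclusion $0\in[\underline{\eta}(x),\overline{\eta}(x)]$ from Lemma~\ref{lemma_feasible}, on the second it is the case hypothesis, and on the last two branches a one-line substitution (using $L_gh(x)(L_gh(x))^\top=|L_gh(x)|^2$) shows $L_gh(x)u^{*}$ equals $\overline{\eta}(x)$ or $\underline{\eta}(x)$ respectively; in fact $L_gh(x)u^{*}$ is just the saturation of $L_gh(x)u_0$ onto $[\underline{\eta}(x),\overline{\eta}(x)]$. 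Reading off the left inequality with \eqref{constraint1} gives $L_fh(x)+L_gh(x)u^{*}\ge-\alpha(h(x))$, and reading off the right inequality with \eqref{constraint2} together with the parallel identities $\hbar=b-h$ and $\nabla\hbar=-\nabla h$ (hence $L_f\hbar=-L_fh$, $L_g\hbar=-L_gh$) gives $L_f\hbar(x)+L_g\hbar(x)u^{*}\ge-\overline{\alpha}(\hbar(x))$. Thus along the closed-loop trajectory $\dot h\ge-\alpha(h)$ and $\dot{\hbar}\ge-\overline{\alpha}(\hbar)$.

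Third, I would close the argument with Lemma~\ref{lemma_forward_invariance}. Given $x(t_0)\in\mathcal{C}_h\cap\mathcal{C}_\hbar$ we have $h(x(t_0))\ge0$ and $\hbar(x(t_0))\ge0$; applying the lemma separately to $t\mapsto h(x(t))$ and $t\mapsto\hbar(x(t))$ yields $h(x(t))\ge0$ and $\hbar(x(t))\ge0$ for all $t\ge t_0$, i.e.\ $x(t)\in\mathcal{C}_h\cap\mathcal{C}_\hbar$, which is the asserted forward invariance.

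I expect the only genuinely delicate point to be the regularity needed to invoke Lemma~\ref{lemma_forward_invariance}: the selection \eqref{control_law_rel1} can be discontinuous where $L_gh(x)$ passes through zero or where $L_gh(x)u_0$ meets an endpoint, so one must argue that closed-loop solutions exist and that $h(x(t))$ and $\hbar(x(t))$ are absolutely continuous with the stated derivative inequalities holding (almost) everywhere --- which is precisely why the nonsmooth version of the comparison principle (Lemma~2 of \cite{glotfelter2017nonsmooth}) is cited alongside the classical one. A convenient way to dispatch this is to note that $x\mapsto L_gh(x)u^{*}(x)$ is continuous even though $u^{*}$ itself may not be, so that $\tfrac{d}{dt}h(x(t))$ and $\tfrac{d}{dt}\hbar(x(t))$ are continuous along any Carath\'eodory solution; the remaining existence/boundedness technicalities are standard. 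Everything else is bookkeeping with the parallel identity.
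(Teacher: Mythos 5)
Your proposal is correct and follows essentially the same route as the paper's own proof: invoke Lemma~\ref{lemma_feasible} to fix $\alpha,\overline{\alpha}$ and get feasibility, use the closed-form solution of Theorem~\ref{thrm_kkt} to conclude $\dot h\ge-\alpha(h)$ and $\dot{\hbar}\ge-\overline{\alpha}(\hbar)$, then apply Lemma~\ref{lemma_forward_invariance} to each function. Your added care about the branch-by-branch verification of the constraints and the regularity needed for the comparison lemma (continuity of $x\mapsto L_gh(x)u^*(x)$ despite discontinuity of $u^*$) is more explicit than the paper, which simply asserts these steps.
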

\begin{proof}
    Lemma \ref{lemma_feasible} establishes the existence of extended class $\mathcal{K}_\infty$ functions $\alpha$ and $\overline{\alpha}$ such that the QP problem \eqref{qp_1}--\eqref{qp_2} is always feasible. Then, the closed-form solution given in Theorem \ref{thrm_kkt} guarantees that $\dot{h}(x(t)) \geq -\alpha(h(x(t)))$ and $ \dot{\hbar}(x(t)) \geq -\overline{\alpha}(\hbar(x(t)))$ for all $t \in [t_0, \infty)$. If the initial conditions satisfy $h(x(t_0)) \geq 0$ and $ \hbar(x(t_0)) \geq 0$, by Lemma \ref{lemma_forward_invariance}, it follows that $h(x(t)) \geq 0$ and $\hbar(x(t)) \geq 0$ for all $t \in [t_0, \infty)$. Hence, the set $\mathcal{C}_h \cap \mathcal{C}_\hbar$ is forward invariant.
\end{proof}

\subsection{High Relative Degree}

Suppose $h(x)$ and $\hbar(x)$ are parallel with relative degree $r = n > 1$. 
We apply the CBF backstepping technique, starting with the following necessary assumption.

\begin{assumption}\label{assum_backstepping}
    $h(x)$ and $\hbar(x)$ are $n$-times differentiable with  $h(x_0)>0$ and $\hbar(x_0)>0$. That is, $x_0 \in \text{Int}(\mathcal{C}_h)\cap\text{Int}(\mathcal{C}_{\hbar})$.
\end{assumption}




Now, we define $h_1(x) \coloneqq h(x)$, $\hbar_1(x) \coloneqq \hbar(x)$, and $b_1 \coloneqq b$ to proceed with the CBF backstepping algorithm. Taking the time derivative of $h_1(x)$ and $\hbar_1(x)$, and adding zero yields:
\begin{align}
    \dot{h}_1 (x) &= -c_1 h_1(x) + \underbrace{c_1 h_1(x) + L_fh_1(x)}_{h_2},\\
    \dot{\hbar}_1 (x) &= -\bar{c}_1 \hbar_1(x) + \bar{c}_1 \hbar_1(x) + L_f\hbar_1(x),\\
    &= -\bar{c}_1 \hbar_1(x)  + \underbrace{\bar{c}_1 (b_1-h_1(x)) - L_fh_1(x)}_{\hbar_2},
\end{align}
which gives:
\begin{align}
    h_2(x) &\coloneqq c_1 h_1(x) + L_fh_1(x),\\
    \hbar_2(x) &\coloneqq \bar{c}_1 (b_1-h_1(x)) - L_fh_1(x)\label{hbar_2}.
\end{align}

Then, from Assumption \ref{assum_backstepping}, we can choose the gains $c_1,\bar{c}_1 > 0$ as:
\begin{align}
    c_1 = \bar{c}_1 > \max\left\{ -\frac{L_fh_1(x_0)}{h_1(x_0)}, \frac{L_fh_1(x_0)}{b_1 - h_1(x_0)}\right\} \geq 0,
\end{align}
to guarantee that $h_2(x_0), \hbar_2(x_0) > 0$. This also allows us to rewrite \eqref{hbar_2} as $\hbar_2(x) = b_2 - h_2(x)$, where we define $b_2 \coloneqq c_1b_1 > 0$.

We now iterate this procedure until the target CBF transformation is acquired.
\begin{align}
    h_1(x) &= h(x),\label{cbf_chain_start}\\
    \hbar_1(x) &= b_1 - h(x),\\
    h_i(x) &= c_{i-1}h_{i-1}(x) + L_fh_{i-1}(x),\label{hi}\\
    \hbar_i(x) &= b_i - h_{i}(x),\label{hbari}
\end{align}
for $i = \{2, \cdots, n\}$ where $b_i \coloneqq c_{i-1}b_{i - 1}$ and
\begin{align}
    c_{i-1} &> \max\left\{ -\frac{L_fh_{i-1}(x_0)}{h_{i-1}(x_0)}, \frac{L_fh_{i-1}(x_0)}{b_{i-1} - h_{i-1}(x_0)}\right\}\label{control_gain},
\end{align}
which is strictly positive.

The induced safe set for the target CBF is then:
\begin{align}
    \mathcal{C}_{h_n} \cap \mathcal{C}_{\hbar_n},\label{safe_set_induced}
\end{align} 
where $\mathcal{C}_{h_n} = \mathcal{C}_h \cap \mathcal{C}_2 \cap \cdots \cap \mathcal{C}_i$ and $\mathcal{C}_{\hbar_n} = \mathcal{C}_\hbar \cap \bar{\mathcal{C}}_2 \cap \cdots \cap \bar{\mathcal{C}}_i$, with $\mathcal{C}_i = \{x \in D \mid h_i(x) \geq 0\}$ and $\bar{\mathcal{C}}_i = \{x \in D \mid \hbar_i(x) \geq 0\}$, where $i \in \{2,\cdots,n\}$. 

It is important to note, while it may seem that the set $\mathcal{C}_{h_n}\cap\mathcal{C}_{\hbar_n}$ does not include all $x_0 \in \text{Int}(\mathcal{C}_h) \cap \text{Int}(\mathcal{C}_\hbar)$, the gain choice in \eqref{control_gain} adapts the sets $\mathcal{C}_i$ and $\bar{\mathcal{C}}_i$ based on the initial condition $x_0$. As a result, for all $x_0 \in \text{Int}(\mathcal{C}_h) \cap \text{Int}(\mathcal{C}_\hbar)$, the point $x_0$ is included in $\mathcal{C}_{h_n} \cap \mathcal{C}_{\hbar_n}$. This is a key difference from HOCBFs \cite{xiao_hocbf}.

Then, we see that if the set $\mathcal{C}_{h_n}\cap\mathcal{C}_{\hbar_n}$ is kept forward invariant, then by design $h_1(x)$ and $\hbar_1(x)$ remain positive for all $t \in [t_0,\infty)$. However, it must be verified that $h_n(x)$ and $\hbar_n(x)$ are valid CBFs at least in the set $\mathcal{C}_{h_n}\cap\mathcal{C}_{\hbar_n}$. That is, we must verify that 
\begin{align}
    L_gh_n(x) &= 0 \implies L_fh_n(x)  \geq -\alpha_n(h_n(x)),\label{cbfcond_hn}\\
    L_g\hbar_n(x) &= 0 \implies L_f\hbar_n(x) \geq -\overline{\alpha}_n(\hbar_n(x))\label{cbfcond_hbarn},
\end{align}
where $\alpha_n$ and $\overline{\alpha}_n$ are extended class $\mathcal{K}_\infty$ functions, for all $x \in \mathcal{C}_{h_n}\cap\mathcal{C}_{\hbar_n}$. Verifying their validity as CBFs is highly dependent on the system and constraints. Hence, we take it as the following assumption:
\begin{assumption}\label{assum_hn_cbf}
    $h_n(x)$ and $\hbar_n(x)$ are valid CBFs for system \eqref{affine} in the set $\mathcal{C}_{h_n}\cap\mathcal{C}_{\hbar_n}$.
\end{assumption}

We summarize this result in Theorem \ref{thrm_highrel_safe}.

\begin{theorem}\label{thrm_highrel_safe}  
For system \eqref{affine}, suppose $ h_1(x) $ and $ \hbar_1(x) $, with relative degree $ r = n > 1 $, are parallel and satisfy Assumption \ref{assum_backstepping}. Let $ h_n(x), \hbar_n(x) $ (as in \eqref{hi}--\eqref{hbari}) satisfy Assumption \ref{assum_hn_cbf}. Then, there exist extended class $ \mathcal{K}_\infty $ functions $ \alpha_n$ and $ \overline{\alpha}_n $ such that applying \eqref{control_law_rel1} with $ h(x) = h_n(x) $, $ \hbar(x) = \hbar_n(x) $ to \eqref{affine} guarantees $ h_1(x(t)), \hbar_1(x(t)) \geq 0 $ for all $ t \in [t_0, \infty) $. Namely, the system remains safe under all nominal control inputs.
\end{theorem}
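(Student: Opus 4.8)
The plan is to reduce the high-relative-degree problem to the relative-degree-one results of Section~\ref{parallel} applied at the top of the backstepping chain, and then propagate nonnegativity back down the chain to $h_1$ and $\hbar_1$.

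First I would record that the constructed pair $(h_n,\hbar_n)$ is itself a \emph{parallel} relative-degree-one CBF pair on the set $\mathcal{C}_{h_n}\cap\mathcal{C}_{\hbar_n}$: by \eqref{hbari} one has $h_n(x)+\hbar_n(x)=b_n$ with $b_n=c_{n-1}b_{n-1}>0$, and since $h_1=h$ has relative degree $n$ while each step \eqref{hi} lowers the relative degree by one, $h_n$ (hence $\hbar_n$) has relative degree one; by Assumption~\ref{assum_hn_cbf} they are valid CBFs on $\mathcal{C}_{h_n}\cap\mathcal{C}_{\hbar_n}$. Therefore Lemma~\ref{lemma_feasible}, Theorem~\ref{thrm_kkt}, and Theorem~\ref{thrm_rel1_Safe} apply with $D$ replaced by $\mathcal{C}_{h_n}\cap\mathcal{C}_{\hbar_n}$ and $(h,\hbar)$ replaced by $(h_n,\hbar_n)$: Lemma~\ref{lemma_feasible} supplies extended class $\mathcal{K}_\infty$ functions $\alpha_n,\overline{\alpha}_n$ making the QP feasible, and Theorem~\ref{thrm_kkt} shows the closed-form law \eqref{control_law_rel1} with $(h,\hbar)=(h_n,\hbar_n)$ solves it, so that along any closed-loop trajectory remaining in $\mathcal{C}_{h_n}\cap\mathcal{C}_{\hbar_n}$ one has $\dot h_n\geq -\alpha_n(h_n)$ and $\dot{\hbar}_n\geq -\overline{\alpha}_n(\hbar_n)$, irrespective of the nominal input $u_0$.

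Next I would check that the initial condition lies in the induced safe set. By Assumption~\ref{assum_backstepping}, $h_1(x_0),\hbar_1(x_0)>0$; inductively, if $h_i(x_0)>0$ and $\hbar_i(x_0)=b_i-h_i(x_0)>0$, then the gain choice \eqref{control_gain} forces $h_{i+1}(x_0)=c_ih_i(x_0)+L_fh_i(x_0)>0$ and $\hbar_{i+1}(x_0)=b_{i+1}-h_{i+1}(x_0)>0$, so $x_0\in\mathcal{C}_{h_n}\cap\mathcal{C}_{\hbar_n}$. Then I would run a bootstrap/maximal-interval argument. On any interval on which $x(t)\in\mathcal{C}_{h_n}\cap\mathcal{C}_{\hbar_n}$, the previous paragraph and Lemma~\ref{lemma_forward_invariance} give $h_n(x(t)),\hbar_n(x(t))\geq 0$. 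For $i\in\{1,\dots,n-1\}$, $h_i$ has relative degree at least two, so $L_gh_i\equiv 0$ and $\dot h_i=L_fh_i=h_{i+1}-c_ih_i\geq -c_ih_i$ as long as $h_{i+1}\geq 0$; likewise $\dot{\hbar}_i=-\dot h_i=\hbar_{i+1}-c_i\hbar_i\geq -c_i\hbar_i$ as long as $\hbar_{i+1}\geq 0$ (using $b_{i+1}=c_ib_i$). Since $s\mapsto c_is$ is extended class $\mathcal{K}_\infty$, a downward induction on $i$ via Lemma~\ref{lemma_forward_invariance}---base case $i=n$ from the first step, initial data from $x_0\in\mathcal{C}_{h_n}\cap\mathcal{C}_{\hbar_n}$---yields $h_i(x(t)),\hbar_i(x(t))\geq 0$ for all such $t$ and all $i$, so $x(t)$ never leaves the closed set $\mathcal{C}_{h_n}\cap\mathcal{C}_{\hbar_n}$; hence the interval extends to $[t_0,\infty)$. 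Specializing to $i=1$ gives $h_1(x(t)),\hbar_1(x(t))\geq 0$ for all $t\geq t_0$ under every nominal input, which is the assertion.

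The main obstacle I anticipate is the apparent circularity between the two parts of the argument: Assumption~\ref{assum_hn_cbf} certifies $h_n,\hbar_n$ as CBFs only \emph{inside} $\mathcal{C}_{h_n}\cap\mathcal{C}_{\hbar_n}$, which is exactly the set we wish to render invariant. Discharging this requires the closedness of $\mathcal{C}_{h_n}\cap\mathcal{C}_{\hbar_n}$ together with a Nagumo/maximal-interval-of-existence argument, and care with the regularity of the closed-loop vector field induced by \eqref{control_law_rel1} so that Lemma~\ref{lemma_forward_invariance}, in its nonsmooth form (cf.\ \cite{glotfelter2017nonsmooth}), applies. Everything else is routine bookkeeping along the backstepping chain.
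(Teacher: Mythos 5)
Your proposal is correct and follows essentially the same route as the paper: certify $(h_n,\hbar_n)$ as a parallel relative-degree-one CBF pair, invoke Lemma~\ref{lemma_feasible}, Theorem~\ref{thrm_kkt}, and Theorem~\ref{thrm_rel1_Safe} to render $\mathcal{C}_{h_n}\cap\mathcal{C}_{\hbar_n}$ forward invariant, and then propagate nonnegativity down the backstepping chain to $h_1,\hbar_1$ via Lemma~\ref{lemma_forward_invariance}. Your extra bootstrap/maximal-interval step, addressing the fact that Assumption~\ref{assum_hn_cbf} certifies the CBF property only on the very set being rendered invariant, is a careful refinement of the same argument (the paper's proof passes over this point silently), not a different approach.
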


\begin{proof}
    Since $h_1(x)$ and $\hbar_1(x)$ are parallel, it follows that $h_n(x)$ and $\hbar_n(x)$ are also parallel, as seen in equations \eqref{hi}--\eqref{hbari}. Also, by Assumption \ref{assum_backstepping} and the CBF backstepping design, $h_n(x_0),\hbar_n(x_0) >0$ is guaranteed. Next, since $h_n(x)$ and $\hbar_n(x)$ are valid CBFs, we invoke Theorem \ref{thrm_rel1_Safe} to guarantee that the set $ \mathcal{C}_{h_n} \cap \mathcal{C}_{\hbar_n} $ is forward invariant. 

    By the CBF backstepping design, the condition ${h_n(x(t)), \hbar_n(x(t)) \geq 0}$ for all ${t \in [t_0, \infty)}$ implies that $ \dot{h}_{n-1}(x(t)) \geq -c_{n-1} h_{n-1}(x(t)) $ and $ \dot{\hbar}_{n-1}(x(t)) \geq -c_{n-1} \hbar_{n-1}(x(t)) $ for all $ t \in [t_0, \infty) $. Since Assumption \ref{assum_backstepping} and the gain choices in \eqref{control_gain} guarantee that $ h_{n-1}(x_0), \hbar_{n-1}(x_0) > 0 $, we invoke Lemma \ref{lemma_forward_invariance} to conclude that $ h_{n-1}(x(t)), \hbar_{n-1}(x(t)) \geq 0 $ for all $ t \in [t_0, \infty) $. Iterating this argument, it follows that $ h_1(x(t)), \hbar_1(x(t)) \geq 0 $ for all $ t \in [t_0, \infty) $.
\end{proof}

\TODO{Compared to recent works such as \cite{taylor_safe_2022} and \cite{ong_rectified_2024}, our method offers advantages in both design flexibility and implementation. While \cite{taylor_safe_2022} can be used to address the parallel boundary issue (as was shown in \cite{cohen_backstep_rom_2024}), its safe set can be overly conservative due to parameter choices, which may require retuning to accommodate for certain initial conditions in the original safe set. In contrast, our method adaptively selects gains \eqref{control_gain} based on the initial condition, ensuring it always lies within the induced safe set.

Relative to the Rectified CBF (ReCBF) approach in \cite{ong_rectified_2024}, our framework is simpler to implement, particularly for high relative degree CBFs. The work in \cite{ong_rectified_2024} uses ReLU (or $\max$) functions to rectify high relative degree CBFs, but because the QP safety filters require differentiating the CBF, the approach relies on carefully chosen hyperparameters to make the ReLU expressions continuously differentiable. This becomes increasingly delicate as the relative degree increases, since the ReLU expressions may need to be continuously differentiable up to $n-1$ times, where $n$ is the original CBF's relative degree. Our method, by comparison, provides a more straightforward algorithm with tunable parameters that only need to satisfy the condition in \eqref{control_gain}.
}

\begin{figure*}[t!] 
\vspace{1em}
    \centering
    \begin{subfigure}[b]{0.58\textwidth}
        \centering
        \includegraphics[height=5.3cm]{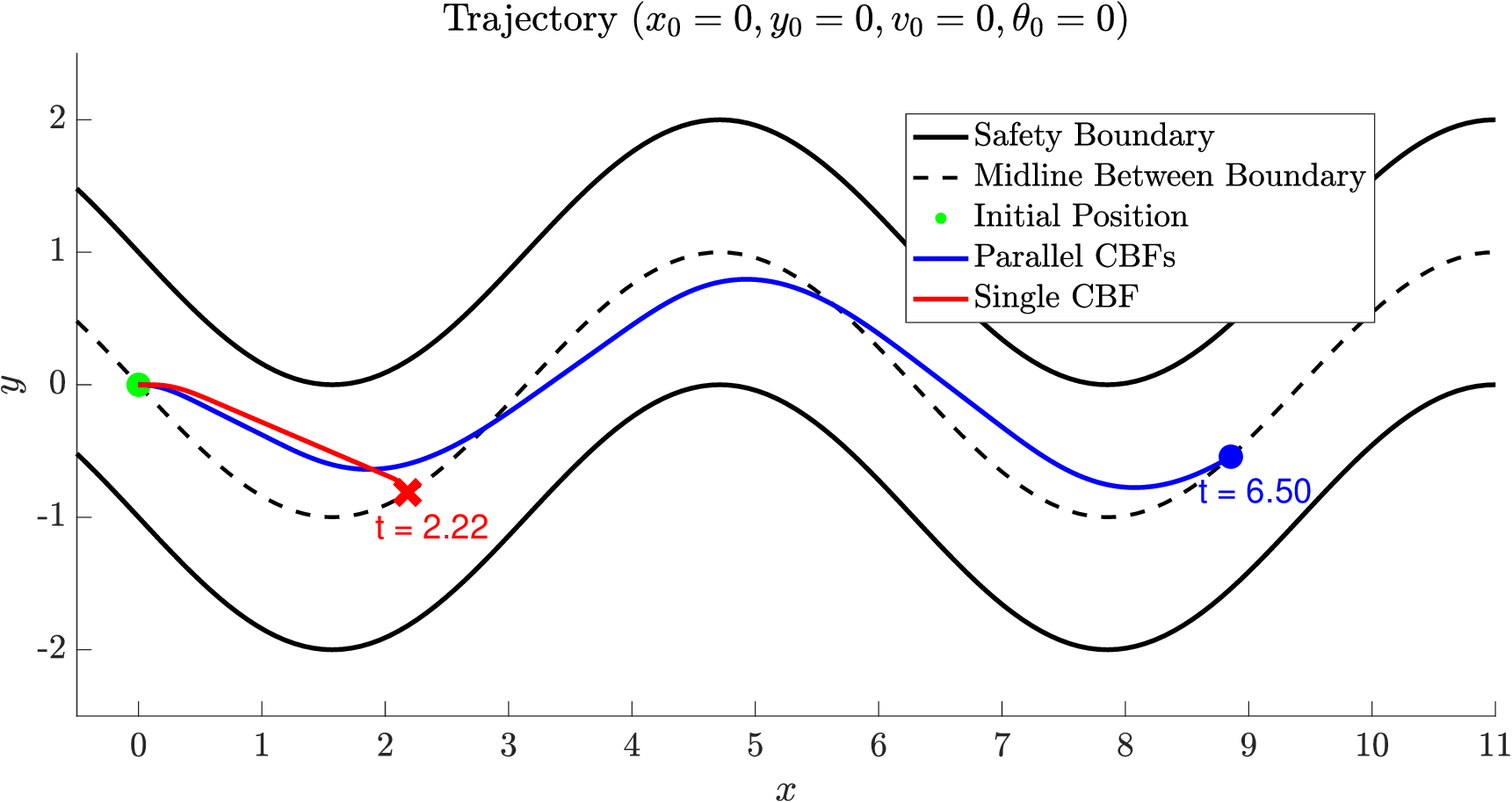}  
        \caption{System Trajectory for $\mathbf{x}_0 = 0$.}
        \label{fig:fig1}
    \end{subfigure}
    \hfill
    \begin{subfigure}[b]{0.37\textwidth}
        \centering
        \includegraphics[height=5.3cm]{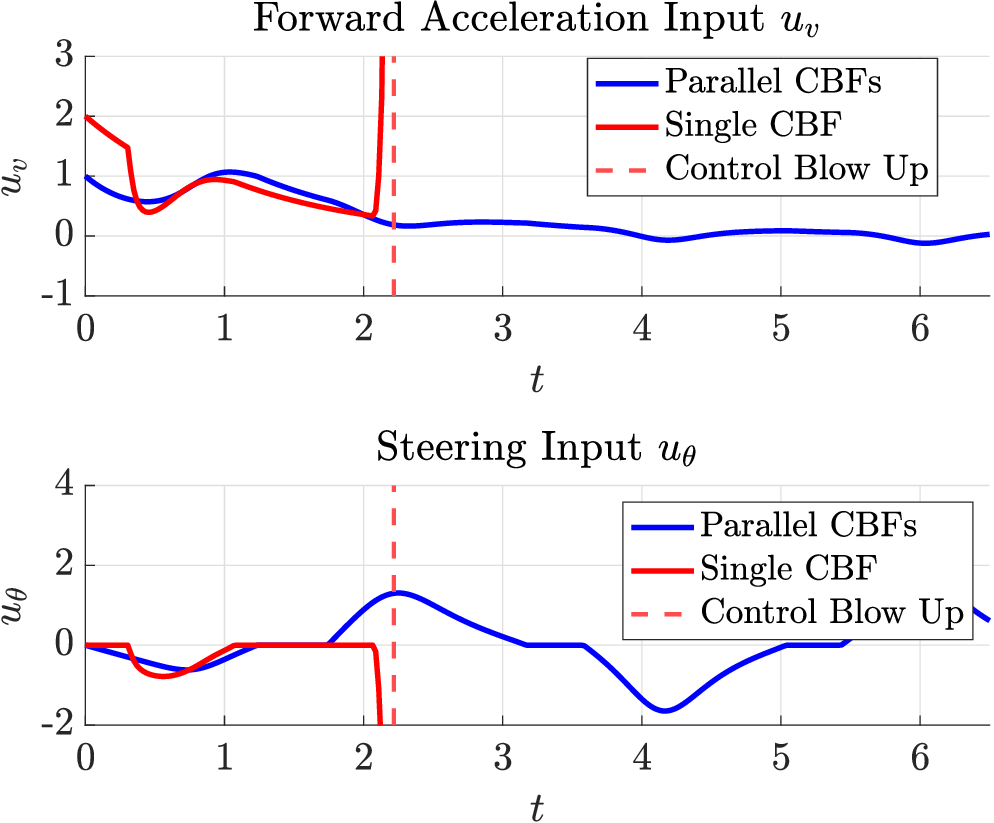}  
        \caption{Control Action}
        \label{fig:fig2}
    \end{subfigure}
    \caption{\TODO{Simulation of system \eqref{uni_sys} using parallel CBFs $h_1(x,y) = \sin x + y + 1$ and $\hbar_1(x,y) = -\sin x - y + 1$ (blue), compared to a single CBF $h_s(x,y) = 1 - (\sin x + y)^2$ (red). The system was initialized at $\mathbf{x}_0 = 0$ with nominal controls $u_{0v} = 2 - v$, $u_{0\theta} = 0$; all control gains were set to $1$, and all extended class $\mathcal{K}_\infty$ functions were chosen as the identity functions $\alpha(s) = s$. (a) illustrates the system trajectories, and (b) depicts the corresponding control inputs. The vanishing gradient drives the single CBF controller to blow up near $\sin x + y = 0$ at $t \approx 2.22s$, while the parallel CBF controller remains well-defined throughout the simulation.}}

    \label{fig:uni_example}
\end{figure*}

\section{Examples and Simulations}

\subsection{Returning to the Motivational Problem}

We revisit the double integrator system $\dot{x}_1 = x_2, \quad \dot{x}_2 = u$, with the safe set $\mathcal{C} = \{x \in \mathbb{R}^2 \mid x_1 \in [-1,1]\}$. We define the CBFs $h(x) = 1 + x_1$ and $\hbar(x) = 1 - x_1$, corresponding to the safe sets $\mathcal{C}_h = \{x \in \mathbb{R}^2 \mid x_1 \geq -1\}$ and $\mathcal{C}_\hbar = \{x \in \mathbb{R}^2 \mid x_1 \leq 1\}$, whose intersection forms $\mathcal{C}$. Since $h(x) + \hbar(x) = 2$, they are parallel. Using backstepping, we assume $h(x_0), \hbar(x_0) > 0$ initially and choose:
\begin{align}
    c_1 = \bar{c}_1 > \max\left\{\frac{x_2(0)}{1 + x_1(0)}, \frac{x_2(0)}{1 - x_1(0)}\right\}.
\end{align}

The target CBFs are then defined as $ h_2(x) = c_1(1 + x_1) + x_2 $ and $ \hbar_2(x) = c_1(1 - x_1) - x_2 $, with $ L_g h_2(x) = 1 $ and $ L_g \hbar_2(x) = -1 $. Since $ L_g h(x) = -L_g \hbar(x) \neq 0 $ for all $ x \in \mathcal{C}_h\cap\mathcal{C}_\hbar $, they trivially satisfy \eqref{cbf_Lgh_cond} and are valid CBFs in $ \mathcal{C}_h\cap\mathcal{C}_\hbar $. Then, we invoke Theorem \ref{thrm_highrel_safe} to guarantee forward invariance of $ \mathcal{C} $.

\subsection{Unicycle Between Parallel Boundaries}

Consider the kinematic unicycle system $\dot{x} = v \cos\theta, \dot{y} = v \sin\theta, \dot{\theta} = u_\theta$, where $[x,y]^\top \in \mathbb{R}^2$ are the positional states, $\theta \in \mathbb{R}$ is the heading angle, $v \in \mathbb{R}$ is the forward velocity input, and $u_\theta \in \mathbb{R}$ is the angular velocity input.

Suppose we are given two parallel positional constraints represented by candidate CBFs $h_1(x,y)$ and $\hbar_1(x,y)$, with safe sets $\mathcal{C}_{h}$ and $\mathcal{C}_{\hbar}$. Since both depend only on $x$ and $y$, they have a relative degree of 1 with respect to $v$ and 2 with respect to $u_\theta$, resulting in a mixed relative degree problem.



To first address this, we add an integrator $\dot{v} = u_v$ and assume control over the vehicle's forward acceleration similar to that in \cite{kim2024robustcontrolbarrierfunction}. That is,
\begin{align}\label{uni_sys}
    \dot{\mathbf{x}}
    =
    \begin{bmatrix}
        \dot{x}\\
        \dot{y}\\
        \dot{v}\\
        \dot{\theta}
    \end{bmatrix}
    =
    \underbrace{
    \begin{bmatrix}
        v\cos\theta\\
        v\sin\theta\\
        0\\
        0
    \end{bmatrix}}_{f(\mathbf{x})}
    +
    \underbrace{
    \begin{bmatrix}
        0\\
        0\\
        1\\
        0
    \end{bmatrix}}_{g_v(\mathbf{x})}u_v
    +
    \underbrace{
    \begin{bmatrix}
        0\\
        0\\
        0\\
        1
    \end{bmatrix}}_{g_\theta(\mathbf{x})}u_\theta.
\end{align}

This essentially modifies the unicycle model to a simplified bicycle model \cite{rahman_driver_2021} and results in a uniform relative degree $2$ problem. We now proceed with the backstepping procedure with the assumption that $h_1(x_0,y_0),\hbar_1(x_0,y_0)>0$.

Since $h_1(x,y)$ and $\hbar_1(x,y)$ are parallel, without loss of generality, we compute the backstepping terms with respect to $h_1(x,y)$. 
By choosing the appropriate gains $c_1 = \overline{c}_1 > 0$ as in \eqref{control_gain}, we get the following target CBF through the backstepping procedure:
\begin{align}
    h_2(\mathbf{x}) = b_2 - \hbar_2(\mathbf{x}) = c_1h_1(x,y) + L_fh_1(\mathbf{x})\label{h2},
\end{align}
with a corresponding safe set $\mathcal{C}_{h_2}\cap\mathcal{C}_{\hbar_2}$ as in \eqref{safe_set_induced}. Then, defining $L_gh_2(\mathbf{x}) = [L_{g_v}h_2(\mathbf{x}),L_{g_\theta}h_2(\mathbf{x})]$ and $u = [u_v,u_\theta]^\top$, the time derivative is:
\begin{align}
    \dot{h}_2(\mathbf{x}) 
    &= L_gh_2(\mathbf{x)}u + c_1v\left(\nabla_{x,y} h_1(x,y)^\top \cdot\begin{bmatrix}
        \cos\theta\\
        \sin\theta
    \end{bmatrix}\right)\nonumber\\
    &+  v^2\left(
    \begin{bmatrix}
        \cos\theta\\
        \sin\theta
    \end{bmatrix}^\top
    \cdot\nabla^2_{x,y} h_1(x,y) \cdot 
    \begin{bmatrix}
        \cos\theta\\
        \sin\theta
    \end{bmatrix}\right)\label{h2_dot_uni},
\end{align}
where 
\begin{align}
    L_{g_v}h_2(\mathbf{x}) &= \nabla_{x,y} h_1(x,y)^\top \cdot\begin{bmatrix}
        \cos\theta\\
        \sin\theta
    \end{bmatrix}\label{Lg_v_h},\\
    L_{g_\theta}h_2(\mathbf{x}) &= v\left(\nabla_{x,y} h_1(x,y)^\top \cdot \begin{bmatrix}
        -\sin\theta\\
        \cos\theta
    \end{bmatrix}\right)\label{Lg_theta_h}.
\end{align}
Intuitively, if $ \nabla_{x,y} h_1(x,y) \neq 0 $, the control $ u_v $ loses agency over safety ($ L_{g_v} h_2(\mathbf{x}) = 0 $) only when the heading angle is perpendicular to the gradient, i.e., parallel to the safety boundary. Similarly, $ u_\theta $ loses agency ($ L_{g_\theta} h_2(\mathbf{x}) = 0 $) when the heading angle is parallel to the gradient, i.e., perpendicular to the boundary, or if $ v = 0 $. Thus, $ L_g h_2(\mathbf{x}) = 0 $ only if the heading is parallel to the boundary and $ v = 0 $, forming the basis of the following claim.

\begin{theorem}\label{thrm_uni_parallel_cbf}
    For system \eqref{uni_sys}, given parallel positional constraints $h_1(x,y)$ and $\hbar_1(x,y)$, if $h_1(x_0,y_0),\hbar_1(x_0,y_0) > 0$, then $\nabla_{x,y} h_1(x,y) \neq 0$ for all $x,y \in \mathcal{C}_{h}\cap\mathcal{C}_{\hbar}$ is a sufficient condition for $h_2(\mathbf{x})$ and $\hbar_2(\mathbf{x})$ (as in \eqref{h2}) to be valid CBFs in the set $\mathcal{C}_{h_2}\cap\mathcal{C}_{\hbar_2}$.
\end{theorem}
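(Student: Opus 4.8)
The plan is to directly verify the CBF conditions \eqref{cbfcond_hn}--\eqref{cbfcond_hbarn} for $n=2$ on the set $\mathcal{C}_{h_2}\cap\mathcal{C}_{\hbar_2}$, exploiting the parallel structure together with the stated gradient condition. First I would record three facts about this set. (i) Since $\mathcal{C}_{h_2}=\mathcal{C}_h\cap\mathcal{C}_2$ and $\mathcal{C}_{\hbar_2}=\mathcal{C}_\hbar\cap\bar{\mathcal{C}}_2$, we have $\mathcal{C}_{h_2}\cap\mathcal{C}_{\hbar_2}\subseteq\mathcal{C}_h\cap\mathcal{C}_\hbar$, so the hypothesis supplies $\nabla_{x,y}h_1(x,y)\neq 0$ throughout. (ii) Every $\mathbf{x}$ in the set satisfies $h_2(\mathbf{x})\geq 0$ and $\hbar_2(\mathbf{x})\geq 0$ by definition. (iii) Because $h_1,\hbar_1$ are parallel, $\hbar_2=b_2-h_2$ (as in \eqref{h2}), hence $L_g\hbar_2=-L_gh_2$ and $L_f\hbar_2=-L_fh_2$, so the loci $\{L_gh_2(\mathbf{x})=0\}$ and $\{L_g\hbar_2(\mathbf{x})=0\}$ coincide, exactly as in the proof of Lemma \ref{lemma_feasible}; it therefore suffices to analyze that common locus. (The hypothesis $h_1(x_0,y_0),\hbar_1(x_0,y_0)>0$ is what makes the gains in \eqref{control_gain} choosable so that $h_2,\hbar_2$ are well-defined in the first place.)

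The core step is to show that $L_gh_2(\mathbf{x})=0$ forces $v=0$. Using \eqref{Lg_v_h}--\eqref{Lg_theta_h}, $L_gh_2(\mathbf{x})=0$ means both $\nabla_{x,y}h_1\cdot[\cos\theta,\sin\theta]^\top=0$ and $v\bigl(\nabla_{x,y}h_1\cdot[-\sin\theta,\cos\theta]^\top\bigr)=0$. Since $\{[\cos\theta,\sin\theta]^\top,[-\sin\theta,\cos\theta]^\top\}$ is an orthonormal basis of $\mathbb{R}^2$, the Pythagorean identity gives $\bigl(\nabla_{x,y}h_1\cdot[\cos\theta,\sin\theta]^\top\bigr)^2+\bigl(\nabla_{x,y}h_1\cdot[-\sin\theta,\cos\theta]^\top\bigr)^2=|\nabla_{x,y}h_1|^2>0$ by fact (i). Thus the first equation forces $\nabla_{x,y}h_1\cdot[-\sin\theta,\cos\theta]^\top\neq 0$, and then the second forces $v=0$.

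Finally I would substitute $v=0$ into \eqref{h2_dot_uni}: both remaining drift terms carry a factor $v$ or $v^2$, so $L_fh_2(\mathbf{x})=0$, and by fact (iii) also $L_f\hbar_2(\mathbf{x})=-L_fh_2(\mathbf{x})=0$. Hence, choosing any extended class $\mathcal{K}_\infty$ functions $\alpha_2,\overline{\alpha}_2$ (e.g.\ the identity), $L_gh_2(\mathbf{x})=0$ implies $L_fh_2(\mathbf{x})=0\geq-\alpha_2(h_2(\mathbf{x}))$ and $L_f\hbar_2(\mathbf{x})=0\geq-\overline{\alpha}_2(\hbar_2(\mathbf{x}))$ by fact (ii), which are precisely the conditions \eqref{cbfcond_hn}--\eqref{cbfcond_hbarn}. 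By the equivalence \eqref{cbf_Lgh_cond}, $h_2$ and $\hbar_2$ are valid CBFs on $\mathcal{C}_{h_2}\cap\mathcal{C}_{\hbar_2}$, i.e.\ Assumption \ref{assum_hn_cbf} holds. I do not expect a real obstacle here; the only point requiring care is the orthonormal-frame argument that rules out $L_gh_2=0$ with $v\neq 0$, and checking that the chain of set inclusions makes the gradient hypothesis available exactly where it is invoked.
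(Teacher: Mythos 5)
Your proposal is correct and follows essentially the same route as the paper's proof: the orthogonality of $[\cos\theta,\sin\theta]^\top$ and $[-\sin\theta,\cos\theta]^\top$ together with $\nabla_{x,y}h_1\neq 0$ forces $v=0$ on the locus $L_gh_2=0$, whence $L_fh_2=L_f\hbar_2=0$ and the CBF inequalities hold trivially since $h_2,\hbar_2\geq 0$ on $\mathcal{C}_{h_2}\cap\mathcal{C}_{\hbar_2}$. Your write-up is somewhat more explicit about the set inclusions and the parallel-structure identities, but the argument is the same.
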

\begin{proof}
    Suppose $\nabla_{x,y}h_1(x,y) \neq 0$ for all $x,y \in \mathcal{C}_{h}\cap\mathcal{C}_{\hbar}$. To establish that $h_2(\mathbf{x})$ and $\hbar_2(\mathbf{x})$ are CBFs in $\mathcal{C}_{h_2}\cap\mathcal{C}_{\hbar_2}$, it suffices to verify that $L_g h_2(\mathbf{x}) = 0$ implies $L_f h_2(\mathbf{x}) \geq -\alpha_2(h_2(\mathbf{x}))$ and $L_g \hbar_2(\mathbf{x}) = 0$ implies $L_f \hbar_2(\mathbf{x}) \geq -\bar{\alpha}_2(\hbar_2(\mathbf{x}))$ for all $\mathbf{x} \in \mathcal{C}_{h_2}\cap\mathcal{C}_{\hbar_2}$ as in \eqref{cbfcond_hn}--\eqref{cbfcond_hbarn}. Noting that $[\cos\theta, \sin\theta]^\top$ and $[-\sin\theta, \cos\theta]^\top$ are orthogonal, equations $L_{g_v}h_2$ and $L_{g_\theta}h_2$ vanish simultaneously only when $v = 0$ and  
    \begin{align}
        \nabla_{x,y} h_1(x,y)^\top \cdot\begin{bmatrix}
            \cos\theta \\ \sin\theta
        \end{bmatrix} = 0.
    \end{align}  
    Under this condition, \eqref{h2_dot_uni} evaluates to zero, implying $L_g h_2(\mathbf{x}) = 0 \implies L_f h_2(\mathbf{x}) = 0$ and $L_g \hbar_2(\mathbf{x}) = 0 \implies L_f \hbar_2(\mathbf{x}) = 0$. Since $0 \geq -\alpha_2(h_2(\mathbf{x}))$ and $0 \geq -\bar{\alpha}_2(\hbar_2(\mathbf{x}))$ holds for all $\mathbf{x} \in \mathcal{C}_{h_2}\cap\mathcal{C}_{\hbar_2}$,  $h_2(\mathbf{x})$ and $\hbar_2(\mathbf{x})$ are valid CBFs on $\mathcal{C}_{h_2}\cap\mathcal{C}_{\hbar_2}$.
\end{proof}



\TODO{As an example, consider the positional constraints 
\begin{align}
    h_1(x,y) &= \sin x + y + 1,\\
    \hbar_1(x,y) &= -\sin x - y + 1.
\end{align}
These constraints are parallel since $h_1(x,y) + \hbar_1(x,y) = 2$, and their gradients are nonzero as $\nabla_y h_1(\mathbf{x}) = -\nabla_y \hbar_1(\mathbf{x}) = 1$, satisfying the sufficient condition in Theorem~\ref{thrm_uni_parallel_cbf}.

We compare this to the single CBF 
$h_s(x,y) = 1 - (\sin x + y)^2$, 
which defines the same safe set as the intersection of the two parallel constraints. However, when using the CBF backstepping (or HOCBF) method, this formulation loses control authority along the midline $\sin x + y = 0$, where $\nabla h_s(x,y) = 0$. On this line, the CBF becomes ill-defined, and its validity depends on the system’s forward velocity and heading angle.

Figure~\ref{fig:uni_example} shows a simulation comparing both approaches. Both systems were initialized at the origin (i.e., $\mathbf{x}_0 = 0$), which satisfies $h_1(x_0,y_0), \hbar_1(x_0,y_0), h_s(x_0,y_0) > 0$. All gain choices were set to $1$ (satisfying the backstepping design constraint), and all extended class $\mathcal{K}_\infty$ functions were chosen as the identity function (i.e., $\alpha(s) = s$). The nominal control laws were $u_{0v} = 2 - v$ and $u_{0\theta} = 0$.

The single CBF-based safety filter (red) initially produced a valid trajectory, but as it re-encountered the midline, the vanishing gradient caused the CBF condition to fail, leading to a controller blow up at $t \approx 2.22s$. In contrast, the parallel CBF-based safety filter (blue) remained well-defined, as the gradients never vanished in the safe set, and successfully kept the system safe with respect to the parallel boundaries.
}

\section{Conclusion and Future Directions}  
 \TODO{In this paper, we addressed the challenge of valid CBFs for systems with parallel safety boundaries, a problem where relying on a single CBF  is often invalid at the midway point between the boundaries. We identified that the core limitation arises from the vanishing gradient within the interior of the safe set and proposed an alternative approach that constructs separate CBFs for each boundary, effectively addressing the issue. This was validated through a simulation on a unicycle model.}
    
\TODO{Our method was based on unbounded control and CBFs with uniform relative degrees. While it does not directly address input constraints or mixed relative degree CBFs without modifications, extending the framework to handle such cases is a promising direction for future research. Broadening its applicability to unmodified systems like the unicycle is especially important in cases where structural modification is infeasible.}
\bibliographystyle{ieeetr}
\bibliography{root}
\end{document}